\documentclass[11pt,a4paper,twoside,bibliography=totoc]{scrartcl}
\usepackage{a4wide}
\usepackage{amsfonts}
\usepackage{amsmath}
\usepackage{amssymb}
\usepackage{array}
\usepackage{amsthm}
\usepackage{subfigure}
\usepackage{subfig}
\usepackage{mathdots}

\usepackage{algorithm}
\usepackage{algorithmic}

\usepackage{graphicx}
\usepackage{color}
\usepackage{pgf}
\usepackage{scrpage2}
\usepackage{multirow}
\usepackage{listings} 
  \lstset{language=matlab,showstringspaces=false,basicstyle={\ttfamily}}

\usepackage{tikz}
\usepackage{graphicx}
		
\newcommand{\N}{\ensuremath{\mathbb{N}}}
\newcommand{\T}{\ensuremath{\mathbb{T}}}
\renewcommand{\S}{\ensuremath{\mathbb{S}}}

\newcommand{\NZ}{\ensuremath{\mathbb{N}_{0}}}
\newcommand{\Z}{\ensuremath{\mathbb{Z}}}

\newcommand{\R}{\ensuremath{\mathbb{R}}}
\newcommand{\C}{\ensuremath{\mathbb{C}}}
\newcommand{\K}{\ensuremath{\mathbb{K}}}

\newcommand{\ii}{\textnormal{i}}

\newcommand{\eip}[1]{\textnormal{e}^{2\pi\ii{#1}}}

\newcommand{\set}[1]{\left\{ #1 \right\}}
\newcommand{\abs}[1]{\left| #1 \right |}

\DeclareMathOperator*{\diag}{diag}

\DeclareMathOperator*{\maxdeg}{maxdeg}
\DeclareMathOperator*{\sep}{sep}
\DeclareMathOperator*{\lt}{lt}
\DeclareMathOperator{\V}{\mathcal{V}}
\DeclareMathOperator{\I}{\mathcal{I}}

\setcounter{totalnumber}{4}

\newtheorem{thm}{Theorem}[section]
\newtheorem{lemma}[thm]{Lemma}
\newtheorem{remark}[thm]{Remark}
\newtheorem{definition}[thm]{Definition}
\newtheorem{example}[thm]{Example}
\newtheorem{corollary}[thm]{Corollary}
\newtheorem{proposition}[thm]{Proposition}



\numberwithin{equation}{section}
\numberwithin{table}{section}
\numberwithin{figure}{section}

\newcommand{\bend}{\hspace*{0ex} \hfill \hbox{\vrule height
    1.5ex\vbox{\hrule width 1.4ex \vskip 1.4ex\hrule  width 1.4ex}\vrule
    height 1.5ex}}

\long\def\symbolfootnote[#1]#2{\begingroup%
\def\thefootnote{\fnsymbol{footnote}}\footnote[#1]{#2}\endgroup}

\newcommand{\calA}{\mathcal{A}} 
\newcommand{\MM}{\mathcal{M}}

\newcommand{\dd}{\mathrm{d}}

\renewcommand{\mathbf}[1]{\ensuremath{\boldsymbol{#1}}}

\newcommand{\rank}{ \operatorname{rank}}

\renewcommand{\thefootnote}{\fnsymbol{footnote}}

\allowdisplaybreaks
\title{Prony's method on the sphere}

\date{\today}
\date{}

\author{Stefan Kunis\footnotemark[2]\space\footnotemark[3]
\and H.~Michael M\"oller\footnotemark[4]
\and Ulrich von der Ohe\footnotemark[2]}


\begin{document}

\maketitle

\begin{abstract}

\noindent
Eigenvalue analysis based methods are well suited for the reconstruction of finitely supported measures from their moments up to a certain degree.
We give a precise description when Prony's method succeeds in terms of an interpolation condition.
In particular, this allows for the unique reconstruction of a measure from its trigonometric moments whenever its support is separated and
also for the reconstruction of a measure on the unit sphere from its moments with respect to spherical harmonics.
Both results hold in arbitrary dimensions and also yield a certificate for popular semidefinite relaxations of these reconstruction problems.

\medskip

\noindent\textit{Key words and phrases} :
frequency analysis,
spectral analysis,
exponential sum,
moment problem,
super-resolution.
\medskip

\noindent\textit{2010 AMS Mathematics Subject Classification} : \text{
65T40, 
42C15, 
30E05, 
65F30 
}
\end{abstract}

\footnotetext[2]{
  Osnabr\"uck University, Institute of Mathematics
  \texttt{\{skunis,uvonderohe\}@uos.de}
}

\footnotetext[3]{
  Helmholtz Zentrum M\"unchen, Institute of Computational Biology
}

\footnotetext[4]{
  TU Dortmund, Fakult\"at f\"ur Mathematik
  \texttt{moeller@mathematik.tu-dortmund.de}
}

\section{Introduction}
\label{sect:intro}

Prony's method \cite{Pr95}, see also e.g.~\cite{PoTa12b,PlTa13}, reconstructs the coefficients and distinct parameters $\hat{f}_j,x_j\in\C$, $j=1,\hdots,M$, of the Dirac ensemble
$\mu=\sum_{j=1}^M\hat{f}_j\delta_{x_j}$
from the $2M+1$ moments
\begin{equation*}
  f(k)=\int_{\C}x^k\dd\mu(x)
  =\sum_{j=1}^M\hat{f}_jx_j^k,
  \quad
  k=0,\hdots,2M.
\end{equation*}
The computation of the parameters $x_j$ is done by setting up a certain Hankel or Toeplitz matrix of these moments and computing the roots of the polynomial with the monomial coefficients given by any non-zero kernel vector of this matrix.
Afterwards, the coefficients $\hat{f}_j$ can be computed by solving a Vandermonde linear system of equations.

We recently generalized this prototypical algorithm to the multivariate case by realizing the parameters as common roots of $d$-variate polynomials belonging to the
kernel of a certain multilevel Hankel or Toeplitz matrix \cite{KuPeRoOh16}.
In the present paper, we give a precise description when the parameters can be identified in terms of a simple interpolation condition, which in turn is
equivalent to the surjectivity of a certain evaluation homomorphism and also to the full rank of a certain Vandermonde matrix.
Since identifiability also implies full rank of a slightly larger Vandermonde matrix, we end up with a variant of the so called flat extension property \cite{CuFi00,LaMo09}.
Beyond this, our characterization with respect to the Vandermonde matrix allows to derive simple geometric conditions on the parameters:
given the order of the moments is bounded from below by some explicit constant divided by the separation distance of the parameters, unique reconstruction is guaranteed.
In particular, we get rid of the commonly stated technical condition that the order has to be larger than the number of parameters and weaken the
`coordinate wise' separation condition as used in \cite{Li15} to a truly multivariate separation condition.
Moreover, studying the Vandermonde-like factorization of the Hankel-like matrix of moments allows for a transparent generalization to Dirac ensembles on the sphere where
only moments with respect to the spherical harmonics are used for reconstruction.
Recently, the considered problem has also been studied as a constrained total variation minimization problem on the space of measures and attracted quite some attention, see
e.g.~\cite{BrPi13,DuPe15} and references therein.
As a corollary to our results, we give a painless construction of a so-called dual certificate for the total variation minimization problem on the $d$-dimensional torus \cite{CaFe13,CaFe14} and on the $d$-dimensional sphere \cite{BeDeFe15a,BeDeFe15b}.
Since our construction is a sum of squares, this construction also bypasses the relaxation from a nonnegative polynomial to the semidefinite program which is in general known
to possibly increase degrees for $d=2$ and to possibly fail for $d>2$, cf.~\cite[Remark~4.17, Theorem~4.24, and Remark~4.26]{Du07}.
We finally close by two small numerical examples, postpone a detailed study of stability and computational times to a future exposition, and give a short summary.

\section{Preliminaries}\label{sect:pre}

Throughout the paper, $\K$ denotes a field and $d\in\N$ denotes a natural number.
For $x\in\K^d$, $k\in\NZ^d$, we use the multi-index notation $x^k:=x_1^{k_1}\dotsm x_d^{k_d}$.
We start by defining the object of our interest, that is, multivariate exponential sums, as a natural generalization of univariate exponential sums.
\begin{definition}\label{def:mvexpsum}
A function $f\colon\NZ^d\to\K$ is a \emph{$d$-variate exponential sum}
if there are $M\in\N$, $\hat{f}_1,\hdots,\hat{f}_M\in\K$, and pairwise distinct $x_1,\hdots,x_M\in\K^d$
such that we have
\begin{equation*}
  f(k)  =\sum_{j=1}^M\hat{f}_jx_j^k
\end{equation*}
for all $k\in\NZ^d$.
In that case $M$, $\hat{f}_j$, and $x_j$, $j=1,\hdots,M$, are uniquely determined, and $f$ is called~\emph{$M$-sparse},
the $\hat{f}_j$ are called \emph{coefficients} of $f$,
and $x_j$ are called \emph{parameters} of $f$.
The set of parameters of $f$ is denoted by $\Omega=\Omega_f:=\{x_j:j=1,\hdots,M\}$.
\end{definition}
Let $f\colon\NZ^d\to\K$ be an $M$-sparse $d$-variate exponential sum with coefficients $\hat{f}_j\in\K$ and parameters $x_j\in\K^d$, $j=1,\dots,M$.
Our objective is to reconstruct the coefficients and parameters of $f$ given a finite set of samples of $f$ at a subset of $\NZ^d$, see also~\cite{PePlSc15}.

The following notations will be used throughout the paper.
For $k,n\in\NZ^d$ let $\abs{k}=\sum_{j=1}^dk_j$ and $N:=\binom{n+d}{d}$.
The matrix
\begin{equation*}
  H_n:=H_n(f):=\left(f(k+\ell)\right)_{k,\ell\in \NZ^d,|k|,|\ell|\le n}\in\K^{N\times N}
\end{equation*}
will play a crucial role in the multivariate Prony method.
Note that its entries are sampling values of $f$ at a grid of $\binom{2n+d}{d}$ integer points and that it is a sub-matrix of the multilevel Hankel matrix
$(f(k+\ell))_{k,\ell\in\{0,\dots,n\}^d}$.

Next we establish the crucial link between the matrix $H_n$ and the roots of multivariate polynomials.
To this end, let $\Pi:=\K[X_1,\dots,X_d]$ denote the $\K$-algebra of $d$-variate polynomials over $\K$
and for $p=\sum_kp_kX_1^{k_1}\dotsm X_d^{k_d}\in\Pi\setminus\{0\}$ let
$\deg(p):=\max\{\abs{k}:p_k\ne0\}$.
The $N$-dimensional sub-vector space of $d$-variate polynomials of degree at most $n$ is
\begin{equation*}
  \Pi_n:=\{p\in\Pi\setminus\{0\}:\deg(p)\le n\}\cup\{0\}.
\end{equation*}
For arbitrary $V\subset\K^d$, the \emph{evaluation homomorphism} at $V$ will be denoted by
\begin{equation*}
  \calA^V\colon\Pi\to\K^V,  \quad  p\mapsto(p(x))_{x\in V},
\end{equation*}
and its restriction to the sub-vector space $\Pi_n\subset\Pi$ will be denoted by $\calA_n^V$.
Note that the representation matrix of $\calA_n=\calA_n^\Omega$ with $\Omega=\{x_1,\dots,x_M\}$
w.r.t.~the canonical basis of $\K^M$ and the monomial basis of $\Pi_n$
is given by the multivariate Vandermonde matrix
\begin{equation*}
  A_n=\big(x_j^k\big)_{\substack{j=1,\dots,M\\k\in\NZ^d,|k|\le n}}\in\K^{M\times N}.
\end{equation*}
The connection between the matrix $H_n$ and polynomials that vanish on $\Omega$ lies in the observation that, using Definition~\ref{def:mvexpsum},
the matrix $H_n$ admits the factorization
\begin{equation}\label{eq:factorH}
  H_n=(f(k+\ell))_{k,\ell\in \NZ^d,|k|,|\ell|\le n}=A_n^{\top}DA_n,
\end{equation}
with $D=\diag(\hat{f}_1,\dots,\hat{f}_M)$.
Therefore the kernel of $A_n$, corresponding to the polynomials in $\Pi_n$ that vanish on $\Omega$, is a subset of the kernel of $H_n$.

In order to deal with the multivariate polynomials encountered in this way we need some additional notation.
The \emph{zero locus} of a set $P\subset\Pi$ of polynomials is denoted by
\begin{equation*}
\V(P):=\{x\in\K^d:\text{$p(x)=0$ for all $p\in P$}\},
\end{equation*}
that is,
$\V(P)$ consists of the \emph{common} roots of all the polynomials in $P$.
For a set $V\subset\K^d$, the kernel of $\calA^V$ (which is an ideal of $\Pi$) will be denoted $\I(V)$
and is called the \emph{vanishing ideal} of $V$; it consists of all polynomials that vanish on $V$.
Further, let $\I_n(V):=\ker\calA_n^V=\I(V)\cap\Pi_n$ denote the $\K$-sub-vector space of polynomials of degree at most~$n$ that vanish on $V$.
Subsequently, we identify $\Pi_n$ and $\K^N$
and switch back and forth between
matrix-vector and polynomial notation.
In particular,
we do not necessarily distinguish between
$\calA_n^\Omega$ and its representation matrix $A_n$,
so that e.g.~``$\V(\ker A_n)$'' makes sense.

\section{Main results}\label{sect:main}

We proceed with a general discussion that the identifiability of the parameters and an interpolation at these parameters are almost equivalent.
While this is closely related to the so-called flat extension principle \cite{CuFi00,LaMo09}, we also give a refinement which is of great use when
discussing the moment problem on the sphere.
The second and third subsection study the trigonometric moment problem and the moment problem on the unit sphere, respectively.
In both cases, appropriate separation conditions guarantee the above mentioned interpolation condition and thus identifiability of the parameters.
As a corollary, we give a simple construction of a dual certificate for the total variation minimization problem on the $d$-dimensional torus \cite{CaFe13,CaFe14} and
on the $d$-dimensional sphere \cite{BeDeFe15a,BeDeFe15b}.

\subsection{Interpolation and vanishing ideals}\label{subsect:suff}
We recall some notions from the theory of Gr\"obner bases which are needed in this section, see e.g.~\cite{BeWe93,KrRo00,CoLiSh07}.
A \emph{$d$-variate term} is a polynomial
of the form $X^k=X_1^{k_1}\dotsm X_d^{k_d}$ for some $k=(k_1,\dots,k_d)\in\NZ^d$.
The monoid of all $d$-variate terms will be denoted
$\MM:=\set{X^k:k\in\NZ^d}$.
A \emph{term order} on $\MM$
is a linear order~$\le$ on $\MM$
such that $1\le t$ for all $t\in\MM$
and $t_1\le t_2$ implies $t_1t_3\le t_2t_3$ for all $t_1,t_2,t_3\in\MM$.
For a polynomial $p=\sum_kp_kX^k\in\Pi\setminus\set{0}$
let $\lt_\le(p):=\max_\le\set{X^k:p_k\ne0}$
and for an ideal $I\ne\set{0}$ of $\Pi$
let $\lt_\le(I):=\set{\lt_\le(p):p\in I\setminus\set{0}}$.
The set $\mathcal{N}_\le(I):=\MM\setminus\lt_\le(I)$
is called \emph{normal set} of $I$.
A term order $\le$ is \emph{degree~compatible}
if $t_1\le t_2$ implies $\deg(t_1)\le\deg(t_2)$,
or equivalently,
if $\deg(p)=\deg(\lt_\le(p))$ for all $p\in\Pi\setminus\set{0}$.

\begin{lemma}[see e.g.~\protect{\cite[Prop.~2.6]{FaMo15}}]\label{lem:FassinoMoeller}
Let~$\le$ be a
term order on $\MM$.
If $I$ is an ideal of $\Pi$
and $t\in\mathcal{N}_\le(I)$,
then $t$ is the least element of
$M_t:=\set{\lt(p):\text{$p\in\Pi\setminus\set{0}$, $\overline{p}=\overline{t}$ in $\Pi/I$}}$.
\end{lemma}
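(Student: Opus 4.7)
The plan is to argue by contradiction, using the defining property of the normal set directly. The only content to establish is that $t$ itself lies in $M_t$ and that no element of $M_t$ is strictly smaller than $t$.

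First I would observe that $t\in M_t$ trivially: since $t$ is a term, $t\in\Pi\setminus\{0\}$ with $\lt_\le(t)=t$, and certainly $\overline{t}=\overline{t}$ in $\Pi/I$. In particular $M_t$ is nonempty and contains $t$, so the question is only whether $t$ is a lower bound for $M_t$.

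The main step is then to show $\lt_\le(p)\ge t$ whenever $p\in\Pi\setminus\{0\}$ satisfies $\overline{p}=\overline{t}$ in $\Pi/I$. Suppose for contradiction that $\lt_\le(p)<t$ for some such $p$. Then every term appearing in $p$ is $\le\lt_\le(p)<t$, so the term $t$ does not appear in $p$, and hence $q:=t-p$ is a nonzero polynomial whose largest term is $t$ itself (it cannot be cancelled, and every other term, coming from $-p$, is strictly below $t$). Thus $\lt_\le(q)=t$. On the other hand $q=t-p\in I$, because $\overline{p}=\overline{t}$ in $\Pi/I$ means $t-p\in I$. Consequently $t\in\lt_\le(I)$, contradicting the hypothesis $t\in\mathcal{N}_\le(I)=\MM\setminus\lt_\le(I)$.

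There is no real obstacle: the proof is a one-line unfolding of the definitions of $\mathcal{N}_\le(I)$ and $\lt_\le$. The only point that needs a moment of care is the nonvanishing of $q=t-p$, which is automatic once we note that $t$ cannot be a term of $p$. No assumption of degree compatibility of $\le$ is used, so the statement holds for an arbitrary term order.
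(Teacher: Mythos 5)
Your proof is correct and uses essentially the same mechanism as the paper's: the paper argues directly via a case distinction on whether $t$ occurs as a term of $t-p$, while you take the contrapositive, but both hinge on $t-p\in I$ together with $t\notin\lt_\le(I)$, and you correctly handle the one delicate point (that $q=t-p\neq0$ because $t$ cannot be a term of $p$ when $\lt_\le(p)<t$).
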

\begin{proof}
Since $\lt(t)=t$,
we have $t\in M_t$.
Let $p\in\Pi\setminus\set{0}$ with $\overline{p}=\overline{t}$ in $\Pi/I$.
We have to show $t\le\lt(p)$.
Without loss of generality we can assume $p\ne t$.
Thus let $t-p=\sum_kc_kX^k\ne0$ with $c_k\in\K$ and $X^m=\lt(t-p)$.

\emph{Case~1:}~For every $k$, $t\ne c_kX^k$.
Then, since $p=t-\sum_kc_kX^k$,
we have $t\le\lt(p)$.

\emph{Case~2:}~There is a $k$ such that $t=c_kX^k$.
Then $c_k=1$ and we have $t=X^k\le X^m$
and since $t\in\mathcal{N}(I)=\MM\setminus\lt(I)$
and $X^m=\lt(t-p)\in\lt(I)$,
we have $t<X^m$.
Therefore we have $t<X^m=\lt(t-\sum_kc_kX^k)=\lt(p)$.
\end{proof}

\begin{lemma}\label{lem:Moeller}
Let~$\le$ be a degree~compatible term order on $\MM$.
Let $\Omega\subset\K^d$ be finite
and
$n\in\NZ$ such that the evaluation homomorphism
$\calA_n^\Omega\colon\Pi_n\to\K^\Omega$
is surjective.
Then $\mathcal{N}_\le(\I(\Omega))\subset\Pi_n$.
\end{lemma}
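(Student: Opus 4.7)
The plan is to fix an arbitrary term $t\in\mathcal{N}_\le(\I(\Omega))$ and show $\deg(t)\le n$; since $t$ is itself a polynomial, this yields $t\in\Pi_n$ and therefore the desired inclusion $\mathcal{N}_\le(\I(\Omega))\subset\Pi_n$.

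The engine of the argument will be Lemma~\ref{lem:FassinoMoeller} combined with degree compatibility of $\le$. That lemma identifies $t$ as the $\le$-smallest leading term among all nonzero representatives of the residue class $\overline{t}$ in $\Pi/\I(\Omega)$. Since degree compatibility gives $\deg(p)=\deg(\lt_\le(p))$ for every nonzero $p$, this $\le$-minimality translates directly into a degree bound: as soon as one produces a nonzero $q\in\Pi_n$ with $\overline{q}=\overline{t}$ in $\Pi/\I(\Omega)$, Lemma~\ref{lem:FassinoMoeller} yields $t\le\lt_\le(q)$ and hence $\deg(t)\le\deg(q)\le n$. Such a $q$ is immediately supplied by the surjectivity hypothesis: pick $q\in\Pi_n$ with $\calA_n^\Omega(q)=\calA^\Omega(t)$, so that $q-t\in\I(\Omega)$ and therefore $\overline{q}=\overline{t}$ in $\Pi/\I(\Omega)$.

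The only point requiring care, and the one I expect to be the sole nontrivial step, is verifying $q\ne 0$, without which Lemma~\ref{lem:FassinoMoeller} cannot legitimately be invoked. But $q=0$ would force $t\in\I(\Omega)$, and since $t$ is a term with $\lt_\le(t)=t$ it would follow that $t\in\lt_\le(\I(\Omega))$, contradicting the standing assumption $t\in\mathcal{N}_\le(\I(\Omega))=\MM\setminus\lt_\le(\I(\Omega))$. Beyond this small case check, the proof is a one-step reduction from surjectivity via the preceding lemma and degree compatibility.
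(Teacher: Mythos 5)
Your proof is correct and follows essentially the same route as the paper: produce a nonzero representative $q\in\Pi_n$ of $\overline{t}$ via surjectivity, check $q\ne0$ using $t\notin\lt_\le(\I(\Omega))$, and conclude $\deg(t)\le\deg(q)\le n$ from Lemma~\ref{lem:FassinoMoeller} plus degree compatibility. The only (harmless) difference is that you pass from $\calA^\Omega(q)=\calA^\Omega(t)$ to $\overline{q}=\overline{t}$ directly via $\I(\Omega)=\ker\calA^\Omega$, whereas the paper invokes the Chinese remainder theorem for the (stronger than needed) bijectivity of the induced map $\Pi/\I(\Omega)\to\K^\Omega$.
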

\begin{proof}
Since $\Omega$ is finite,
$\I(\Omega)\ne\set{0}$.
Let $t\in\mathcal{N}(\I(\Omega))$
and consider $\overline{t}$ in $\Pi/\I(\Omega)$.
Since $\I(\Omega)=\bigcap_{a\in\Omega}\I(a)$
and $\I(a)$, $a\in\Omega$, are pairwise co-prime,
by the Chinese remainder theorem
$\varphi\colon\Pi/\I(\Omega)\to\K^\Omega$,
$\overline{p}\mapsto(p(a))_{a\in\Omega}$,
is a bijection.
Since $\calA_n^\Omega\colon\Pi_n\to\K^\Omega$ is surjective,
there is a $p\in\Pi_n$ with
$\varphi(\overline{t})
=\calA_n^\Omega(p)=(p(a))_{a\in\Omega}
=\varphi(\overline{p})$,
hence $\overline{p}=\overline{t}$.
Since $t\notin\I(\Omega)$,
in particular $p\ne0$.
Thus Lemma~\ref{lem:FassinoMoeller}
together with the degree~compatibility of~$\le$
implies
\begin{equation*}
\deg(t)
=\deg(\min\set{\lt q:\text{$q\ne0$, $\overline{q}=\overline{t}$ in $\Pi/\I(\Omega)$}})
=\min\set{\deg(\lt q):\overline{q}=\overline{t}}
\le\deg(p)
\le n,
\end{equation*}
i.e.~$t\in\Pi_n$.
\end{proof}

\begin{thm}\label{thm:Moeller}
Let $\emptyset\ne\Omega\subset\K^d$ be finite
and $n\in\NZ$ such that
$\calA_n^\Omega\colon\Pi_n\to\K^\Omega$
is surjective.
Then $\Omega=\V(\I_{n+1}(\Omega))$.
\end{thm}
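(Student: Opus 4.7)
The plan is to reduce the nontrivial inclusion $\V(\I_{n+1}(\Omega))\subset\Omega$ to the standard equality $\V(\I(\Omega))=\Omega$ by showing that $\I_{n+1}(\Omega)$ already generates the full vanishing ideal $\I(\Omega)$ as an ideal of $\Pi$. Once that is in place, any common zero of $\I_{n+1}(\Omega)$ automatically annihilates every $\Pi$-linear combination of its elements, hence every element of $\I(\Omega)$, so $\V(\I_{n+1}(\Omega))\subset\V(\I(\Omega))=\Omega$. The reverse inclusion $\Omega\subset\V(\I_{n+1}(\Omega))$ is immediate from the very definition of $\I_{n+1}(\Omega)$.

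To prove the generation claim I would fix a degree compatible term order $\le$ on $\MM$ and consider the reduced Gr\"obner basis $G$ of $\I(\Omega)$ with respect to $\le$. Since $G$ generates $\I(\Omega)$, it suffices to show $G\subset\I_{n+1}(\Omega)$. Here Lemma~\ref{lem:Moeller} applies, using the hypothesis that $\calA_n^\Omega$ is surjective, and yields $\mathcal{N}_\le(\I(\Omega))\subset\Pi_n$. By reducedness, every non-leading term of every $g\in G$ already lies in $\mathcal{N}_\le(\I(\Omega))$ and hence has degree at most $n$, so only the leading term of each $g$ can push the degree above $n$.

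The real work is therefore a \emph{corner} degree bound on $t:=\lt_\le(g)$. Because the leading terms of a reduced Gr\"obner basis form a minimal set of generators of the monoid ideal $\lt_\le(\I(\Omega))$, every proper divisor of $t$ must lie in $\mathcal{N}_\le(\I(\Omega))\subset\Pi_n$. Since $\Omega\ne\emptyset$ the ideal $\I(\Omega)$ is proper, so $t\ne 1$, and consequently $X_i\mid t$ for some $i\in\set{1,\dots,d}$; the preceding observation applied to $t/X_i$ gives $\deg(t)=\deg(t/X_i)+1\le n+1$. Degree compatibility of $\le$ then promotes this to $\deg(g)=\deg(t)\le n+1$, so $g\in\Pi_{n+1}\cap\I(\Omega)=\I_{n+1}(\Omega)$.

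Finally, the classical equality $\V(\I(\Omega))=\Omega$ for a finite $\Omega$ is a routine separating-polynomial argument: for any $y\notin\Omega$, pick for each $a\in\Omega$ a coordinate index $j(a)$ with $a_{j(a)}\ne y_{j(a)}$ and observe that $\prod_{a\in\Omega}(X_{j(a)}-a_{j(a)})$ lies in $\I(\Omega)$ but does not vanish at $y$. The main technical obstacle is the corner bound of the third paragraph, where both the conclusion of Lemma~\ref{lem:Moeller} (to bound divisors of $t$) and the degree compatibility of $\le$ (to pass from the leading term back to $g$) enter in an essential way; the rest of the argument is essentially bookkeeping.
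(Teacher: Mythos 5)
Your proof is correct and follows essentially the same route as the paper: both arguments bound the degree of the minimal generators (corners) of $\lt_\le(\I(\Omega))$ by $n+1$ using Lemma~\ref{lem:Moeller} on a proper divisor $t/X_i$, transfer the bound to a Gr\"obner basis via degree compatibility, and conclude $\V(\I_{n+1}(\Omega))\subset\V(\I(\Omega))=\Omega$. The only cosmetic differences are your use of the reduced Gr\"obner basis (the paper picks generators for the corners directly via Dickson's lemma) and your explicit separating-polynomial proof of $\V(\I(\Omega))=\Omega$, which the paper takes as standard.
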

\begin{proof}
Let~$\le$ be a degree~compatible term order on $\MM$
and let
\begin{equation*}
P:=\set{t\in\lt(\I(\Omega)):\text{$t$ $\mid$-minimal in $\lt(\I(\Omega))$}}.
\end{equation*}
We show that $P\subset\Pi_{n+1}$.
Let $t\in P$.
Since $\Omega\ne\emptyset$,
$t\ne1$.
Thus $t=X_jt^\prime$ for some $j\in\set{1,\dots,d}$ and $t^\prime\in\MM$.
By $\mid$-minimality of~$t$ in $\lt(\I(\Omega))$,
we have $t^\prime\notin\lt(\I(\Omega))$.
Thus $t^\prime\in\mathcal{N}(\I(\Omega))$
which implies $\deg(t^\prime)\le n$ by Lemma~\ref{lem:Moeller}
and hence $\deg(t)=\deg(X_jt^\prime)=\deg(X_j)+\deg(t^\prime)\le n+1$,
i.e.~$t\in\Pi_{n+1}$.

By Dickson's lemma
(cf.~\cite[Thm.~5.2 and Cor.~4.43]{BeWe93}),
$P$ is finite.
Thus let $P=\set{t_1,\dots,t_r}$ with pairwise different $t_j$
and $g_1,\dots,g_r\in\I(\Omega)$ with $\lt g_j=t_j$.
Then $G=\set{g_1,\dots,g_r}$ is a (Gr\"obner) basis for $\I(\Omega)$
(see e.g.~Becker-Weispfenning~\cite[Prop.~5.38~(iv)]{BeWe93})
and $\deg(g_j)=\deg(\lt g_j)=\deg(t_j)\le n+1$,
i.e.~$G\subset\I_{n+1}(\Omega)$.
In particular
$\Omega
\subset\V(\I_{n+1}(\Omega))
\subset\V(G)
=\V(\langle G\rangle)
=\V(\I(\Omega))=\Omega$,
since $\Omega$ is finite (as usual, $\langle G\rangle$ denotes the ideal generated by $G$).
\end{proof}

\begin{remark}\label{rem:identifiability-characterization}
In summary, for every subset $\Omega\subset\K^d$ with $\lvert\Omega\rvert=M\in\N$ we have the chain of implications
 \begin{equation*}
 \rank \calA_n^\Omega=M \quad  \Rightarrow \quad \Omega=\V(\I_{n+1}(\Omega)) \quad  \Rightarrow \quad \rank \calA_{n+1}^\Omega=M,
\end{equation*}
where the second implication follows as in the proof of \cite[Thm.~3.1]{KuPeRoOh16}.
Moreover note that the factorization \eqref{eq:factorH} and Frobenius' rank inequality \cite[0.4.5~(e)]{HoJo13} implies
\begin{equation*}
2\rank \calA_n^\Omega\le\rank H_n+M\le\rank \calA_n^\Omega+M,\quad n\in\N, 
\end{equation*}
and thus the equivalence
\begin{equation*}
 \rank H_n=\rank H_{n+1}=M \quad \Leftrightarrow \quad \rank \calA_n^\Omega=M,
\end{equation*}
where the left hand side is exactly the flat extension principle \cite{CuFi00,LaMo09}.
We would like to note that considering $\calA_n$ allows for signed measures and yields simple \emph{a-priori} conditions on the order of the moments, see Lemmata \ref{lem:ingham} and \ref{lem:inghamSd}, while the flat extension principle is an \emph{a-posteriori} test and can in particular be used to find the possibly unknown number of parameter~$M$.
\end{remark}

In order to give a slight refinement of Theorem~\ref{thm:Moeller}
in Corollary~\ref{cor:Moeller}
we need the following notation.
For a set $V\subset\K^d$ let $\Pi_V:=\Pi/\I(V)$
and $\Pi_{V,n}:=\Pi_n/\I_n(V)$.
The map $\Pi_V\to\set{p_{\vert V}:p\in\Pi}$,
$p+\I(V)\mapsto p_{\vert V}$,
(where we use the same notation for a polynomial~$p\in\Pi$
and its induced polynomial function~$p\colon\K^d\to\K$)
is a ring isomorphism.
Thus we may identify the residue class $\overline{p}=p+\I(V)$
of $p\in\Pi$ with the function $p_{\vert V}\colon V\to\K$.
Since the $\K$-vector space homomorphism $\Pi_n\to\Pi_V$, $p\mapsto\overline{p}$,
has $\I_n(V)$ as its kernel,
$\Pi_{V,n}$ is embedded in $\Pi_V$.
The $\K$-vector space $\Pi_{V,n}$ is isomorphic to $\set{p_{\vert V}:p\in\Pi_n}$
by mapping $p+\I_n(V)$ with $p\in\Pi_n$ to $p_{\vert V}$.
For $\Omega\subset V$ let
$\calA_V^\Omega\colon\Pi_V\to\K^\Omega$,
$\overline{p}\mapsto\calA^\Omega(p)$,
which is well-defined by the above,
and let $\calA_{V,n}^\Omega$ denote the restriction of $\calA_V^\Omega$
to the $\K$-sub-vector space $\Pi_{V,n}$ of $\Pi_V$.
Further let $\I_{V,n}(\Omega):=\ker\calA_{V,n}^\Omega$.
For a set $Q\subset\Pi_V$ let
$\V_V(Q):=\set{a\in V:\text{$\overline{q}(a)=0$ for all $\overline{q}\in Q$}}$.

\begin{lemma}\label{lem:V_V(I_(V,n)(Omega))}
Let $V\subset\K^d$,
$\Omega\subset V$
and $n\in\NZ$.
Then we have
\begin{equation*}
\Omega\subset\V_V(\I_{V,n}(\Omega))\subset\V(\I_n(\Omega)).
\end{equation*}
\end{lemma}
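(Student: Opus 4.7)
The plan is to unwind the definitions and exploit the observation that evaluation at a point $a\in V$ is well-defined on residue classes in $\Pi_V=\Pi/\I(V)$, because any two representatives differ by an element of $\I(V)$, which vanishes on $V$. The same applies to the subspace $\Pi_{V,n}=\Pi_n/\I_n(V)$ since $\I_n(V)\subset\I(V)$. Both inclusions should then follow by diagram-chasing between the evaluation maps $\calA_n^\Omega$ and $\calA_{V,n}^\Omega$, using only that $\Omega\subset V$.

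For the first inclusion I would fix $x\in\Omega$ and an arbitrary $\overline{q}\in\I_{V,n}(\Omega)=\ker\calA_{V,n}^\Omega$. Any representative $q\in\Pi_n$ satisfies $q(y)=0$ for all $y\in\Omega$, so in particular $q(x)=0$. Since $x\in\Omega\subset V$, this value depends only on the class $\overline{q}$, giving $\overline{q}(x)=0$; as $\overline{q}$ was arbitrary, $x\in\V_V(\I_{V,n}(\Omega))$.

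For the second inclusion I would start with $a\in\V_V(\I_{V,n}(\Omega))$ and an arbitrary $p\in\I_n(\Omega)$, and show $p(a)=0$. Pushing $p$ through the natural projection $\Pi_n\to\Pi_{V,n}$ produces $\overline{p}\in\Pi_{V,n}$ with $\calA_{V,n}^\Omega(\overline{p})=\calA_n^\Omega(p)=(p(y))_{y\in\Omega}=0$, hence $\overline{p}\in\I_{V,n}(\Omega)$. The defining property of $a$ then gives $\overline{p}(a)=0$, and since $a\in V$ this equals $p(a)$, which is the desired conclusion.

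There is no genuine obstacle; the statement is essentially a bookkeeping exercise. The only points requiring care are keeping the three layers of polynomials $\Pi$, $\Pi_V$, $\Pi_{V,n}$ and their respective evaluation maps separate, and verifying at each step that the chain of inclusions $\I_n(V)\subset\I(V)$ together with $\Omega\subset V$ indeed makes every quotient-evaluation well-defined.
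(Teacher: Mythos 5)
Your proof is correct and follows the same route as the paper: the first inclusion is the direct unwinding of definitions (which the paper simply calls clear), and for the second you push $p\in\I_n(\Omega)$ to its class $\overline{p}\in\Pi_{V,n}$, check $\overline{p}\in\ker\calA_{V,n}^\Omega$, and evaluate at $a$ — exactly the paper's argument.
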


\begin{proof}
The first inclusion is clear.
To prove the second inclusion,
let $a\in\V_V(\I_{V,n}(\Omega))$
and $p\in\I_n(\Omega)=\ker\calA_n^\Omega$.
We have to show that $p(a)=0$.
Let $\overline{p}:=p+\I_n(V)$.
Since $p\in\Pi_n$,
$\overline{p}\in\Pi_n/\I_n(V)=\Pi_{V,n}$
and we have
$\calA_{V,n}^\Omega(\overline{p})
=\calA_V^\Omega(\overline{p})
=\calA^\Omega(p)=0$,
i.e.~$\overline{p}\in\ker\calA_{V,n}^\Omega=\I_{V,n}(\Omega)$.
Since $a\in\V_V(\I_{V,n}(\Omega))$,
that is,
$a\in\V$ and $\overline{q}(a)=0$ for all $\overline{q}\in\I_{V,n}(\Omega)$,
it follows that $p(a)=\overline{p}(a)=0$.
\end{proof}

Combining this with Theorem~\ref{thm:Moeller}
yields the following.

\begin{corollary}\label{cor:Moeller}
Let $V\subset\K^d$,
$\Omega$ be a non-empty finite subset of $V$
and $n\in\NZ$ such that $\calA_{V,n}^\Omega$ is surjective.
Then
\begin{equation*}
\Omega=\V_V(\I_{V,n+1}(\Omega)).
\end{equation*}
\end{corollary}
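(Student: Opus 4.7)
The plan is to reduce the statement to Theorem~\ref{thm:Moeller} and then sandwich the conclusion using Lemma~\ref{lem:V_V(I_(V,n)(Omega))}. The entire argument hinges on a single observation: the ordinary evaluation homomorphism $\calA_n^\Omega$ factors through $\calA_{V,n}^\Omega$.

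First, I would spell out this factorization. Let $\pi_n\colon\Pi_n\to\Pi_{V,n}=\Pi_n/\I_n(V)$ denote the canonical projection, $p\mapsto\overline{p}=p+\I_n(V)$. By construction $\calA_{V,n}^\Omega(\overline{p})=\calA^\Omega(p)$ for every $p\in\Pi_n$, which is exactly the assertion $\calA_n^\Omega=\calA_{V,n}^\Omega\circ\pi_n$. Note that $\calA_{V,n}^\Omega$ is well-defined for precisely the reason that makes the factorization possible, namely $\Omega\subset V$ and hence $\I_n(V)\subset\I_n(\Omega)=\ker\calA_n^\Omega$. Since $\pi_n$ is surjective as a canonical quotient map and $\calA_{V,n}^\Omega$ is surjective by hypothesis, their composition $\calA_n^\Omega\colon\Pi_n\to\K^\Omega$ is surjective as well.

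At this point I would invoke Theorem~\ref{thm:Moeller} directly, which yields $\Omega=\V(\I_{n+1}(\Omega))$. Combining this with Lemma~\ref{lem:V_V(I_(V,n)(Omega))} applied with $n$ replaced by $n+1$, I obtain the chain
\begin{equation*}
\Omega\subset\V_V(\I_{V,n+1}(\Omega))\subset\V(\I_{n+1}(\Omega))=\Omega,
\end{equation*}
so both inclusions collapse to equalities, giving the desired identity $\Omega=\V_V(\I_{V,n+1}(\Omega))$.

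There is no real obstacle here; the only subtlety worth being careful about is making sure the well-definedness of $\calA_{V,n}^\Omega$ (recorded in the paragraph preceding the corollary) is the same fact that produces the factorization $\calA_n^\Omega=\calA_{V,n}^\Omega\circ\pi_n$. Once that is noted, the corollary is a two-line reduction to Theorem~\ref{thm:Moeller}.
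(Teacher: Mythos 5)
Your proposal is correct and follows essentially the same route as the paper: deduce surjectivity of $\calA_n^\Omega$ from that of $\calA_{V,n}^\Omega$ (the paper states this as ``clearly,'' whereas you spell out the factorization through the quotient map), then apply Theorem~\ref{thm:Moeller} and sandwich with Lemma~\ref{lem:V_V(I_(V,n)(Omega))}. No gaps.
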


\begin{proof}
Since $\calA_{V,n}^\Omega\colon\Pi_{V,n}\to\K^\Omega$ is surjective,
$\calA_n^\Omega\colon\Pi_n\to\K^\Omega$ is clearly also surjective.
Therefore we can apply Theorem~\ref{thm:Moeller}
which together with Lemma~\ref{lem:V_V(I_(V,n)(Omega))}
yields
\begin{equation*}
\Omega\subset\V_V(\I_{V,n+1}(\Omega))\subset\V(\I_{n+1}(\Omega))=\Omega.
\end{equation*}
\end{proof}

\subsection{Trigonometric polynomials and parameter on the torus}\label{subsect:trigonometric}

Now let $\K=\C$ and restrict to parameters on the $d$-dimensional torus $\T^d:=\{z\in\C:|z|=1\}^d$ with parameterization
$\T^d\ni z=\eip{t}$ for a unique $t\in[0,1)^d$.
Now, let $M\in\N$, coefficients $\hat{f}_j\in\C\setminus\set{0}$, and pairwise distinct $t_j\in[0,1)^d$, $j=1,\dots,M$,
be given.
Then the trigonometric moment sequence of the complex Dirac ensemble
$\tau:\mathcal{P}([0,1)^d)\rightarrow\C$, $\tau=\sum_{j=1}^M\hat{f}_j\delta_{t_j}$, is the
the $d$-variate exponential sum
\begin{equation*}
f\colon\Z^d\to\C,
\quad
k\mapsto\int_{[0,1)^d}\eip{kt}\dd\tau(t)
=\sum_{j=1}^M\hat{f}_j\eip{kt_j},
\end{equation*}
with parameters $\eip{t_j}=(\eip{t_{j,1}},\hdots,\eip{t_{j,d}})\in\T^d$.

A convenient choice for the truncation of this sequence is $|k|_{\infty}=\max\{|k_1|,\hdots,|k_d|\}\le n$.
We define the multivariate Vandermonde matrix a.k.a. nonequispaced Fourier matrix
\begin{equation*}
 F_n:=\left(\eip{k t_j}\right)_{\substack{j=1,\dots,M\\k\in\N_0^d,|k|_{\infty}\le n}}\in\C^{M\times(n+1)^d}.
\end{equation*}

\begin{lemma}\label{lem:Moeller-maxdeg}
Let $\Omega:=\{\eip{t_j}:t_j\in [0,1)^d,\;j=1,\hdots,M\}\subset\T^d$ and $n\in\N_0$ such that $F_n$ has full rank $M$, then
$\Omega=\V(\ker F_{dn+1})$.
\end{lemma}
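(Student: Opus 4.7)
The plan is to reduce this to Theorem~\ref{thm:Moeller} by passing from the max-degree filtration of $\Pi$ (the natural one in view of $F_n$) to the total-degree filtration (the one used in $\calA_n$ and $\I_n$). The key, elementary observation is that a polynomial whose individual-variable degrees are each at most $n$ has total degree at most $dn$, while conversely any polynomial of total degree at most $m$ has individual-variable degrees at most $m$.

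First I would translate the hypothesis on $F_n$ into a surjectivity statement for an evaluation homomorphism. The matrix $F_n$ is, up to transposition, the representation matrix of the evaluation map $\Pi_n^{\max}\to\C^{\Omega}$, $p\mapsto(p(x))_{x\in\Omega}$, where $\Pi_n^{\max}:=\sspan\{X^k:|k|_\infty\le n\}$ and $\Omega=\{\eip{t_j}:j=1,\dots,M\}$. The condition $\rank F_n=M$ is therefore equivalent to the surjectivity of this map.

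Next, since every $p\in\Pi_n^{\max}$ has $\deg(p)\le dn$, the inclusion $\Pi_n^{\max}\subset\Pi_{dn}$ holds, and the restriction $\calA_{dn}^\Omega\colon\Pi_{dn}\to\C^\Omega$ is a fortiori surjective. Theorem~\ref{thm:Moeller} therefore gives $\Omega=\V(\I_{dn+1}(\Omega))$.

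Finally I would identify the polynomial translation of $\ker F_{dn+1}$. Under the correspondence $p=(p_k)_{|k|_\infty\le dn+1}\leftrightarrow\sum_{|k|_\infty\le dn+1}p_k X^k$, the kernel $\ker F_{dn+1}$ corresponds exactly to polynomials of max-degree at most $dn+1$ vanishing on $\Omega$. Since any polynomial of total degree at most $dn+1$ has max-degree at most $dn+1$, the inclusion $\I_{dn+1}(\Omega)\subset\ker F_{dn+1}$ holds, hence $\V(\ker F_{dn+1})\subset\V(\I_{dn+1}(\Omega))=\Omega$; the reverse inclusion $\Omega\subset\V(\ker F_{dn+1})$ is immediate from the definition of the kernel. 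The main conceptual point, rather than an obstacle, is simply to keep the two filtrations apart: the Vandermonde matrix $F_n$ is ``$|k|_\infty$-shaped'' but Theorem~\ref{thm:Moeller} is stated in terms of total degree, and this mismatch is precisely what multiplies $n$ by $d$ in the statement.
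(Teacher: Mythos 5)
Your proof is correct and follows essentially the same route as the paper's: translate full rank of $F_n$ into surjectivity of the evaluation map on max-degree-$n$ polynomials, use $\{0,\dots,n\}^d\subset\{k:|k|\le dn\}$ to get surjectivity of $\calA_{dn}^\Omega$, apply Theorem~\ref{thm:Moeller}, and then use $\I_{dn+1}(\Omega)\subset\ker F_{dn+1}$ to conclude. Your write-up just makes the two filtrations and the final two inclusions more explicit than the paper's terse version.
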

\begin{proof}
First note that $\{k\in\N_0^d:|k|_{\infty}\le n\}=\{0,\hdots,n\}^d\subset I_{dn}$ and thus
${\calA}_{dn}^\Omega$ is surjective and Theorem~\ref{thm:Moeller} yields $\Omega=\V(\ker{\calA}_{dn+1}^\Omega)$.
Finally note that $I_{dn}\subset \{0,\hdots,dn+1\}^d$ and thus the result follows from $\V(\ker{\calA}_{dn+1}^\Omega)\supset\V(\ker F_{dn+1})$.
\end{proof}

\begin{remark}
It is tempting to try to prove Lemma~\ref{lem:Moeller-maxdeg} with~$n+1$
instead of~$dn+1$ analogously to Theorem~\ref{thm:Moeller} by using
``$\maxdeg$-compatible'' term orders instead of degree compatible term orders.
However, for $d\ge2$ there are no such term orders. To see this, let~$\le$ be
a term order on $\MM$ and w.l.o.g.~let $X_2\le X_1$.
Then $X_2^2\le X_1X_2$ and $\maxdeg(X_2^2)=2>1=\maxdeg(X_1X_2)$.
\end{remark}

\begin{lemma}[\protect{\cite[Lem.~3.1]{PoTa132}}]\label{lem:ingham}
 For $\Omega:=\{\eip{t_j}:t_j\in [0,1)^d,\;j=1,\hdots,M\}\subset\T^d$ let 
  \begin{equation*}
   \sep(\Omega):=\min_{r\in\Z^d,\;j\ne\ell}\| t_j-t_{\ell}+r\|_{\infty}
  \end{equation*}
 denote the separation distance and call the set of parameters $q$-separated if $\sep(\Omega)>q$.
 Now if $n\in\NZ$ fulfills $n>\sqrt{d}/q$, then the matrix $F_n\in\C^{M\times (n+1)^d}$ has full rank $M$.
\end{lemma}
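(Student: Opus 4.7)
The plan is to establish full row rank of $F_n$ by showing that the Hermitian Gram matrix $G:=F_nF_n^*\in\C^{M\times M}$ is strictly positive definite; this forces the $M$ rows of $F_n$ to be linearly independent and is therefore equivalent to the claim. Because the index set $\set{k\in\NZ^d:\abs{k}_\infty\le n}=\set{0,\dots,n}^d$ is a Cartesian product, the entries of $G$ factor coordinate-wise,
\begin{equation*}
G_{j\ell}=\sum_{k\in\set{0,\dots,n}^d}\eip{k\cdot(t_j-t_\ell)}=\prod_{m=1}^dD_n^+(t_{j,m}-t_{\ell,m}),
\end{equation*}
where $D_n^+(t):=\sum_{k=0}^n\eip{kt}=\eip{nt/2}\sin(\pi(n+1)t)/\sin(\pi t)$ is a one-sided Dirichlet kernel with $D_n^+(0)=n+1$, so that $G_{jj}=(n+1)^d$.

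The plan is then to conclude positive definiteness by Gershgorin's circle theorem, for which it suffices to verify the strict diagonal dominance
\begin{equation*}
(n+1)^d>\sum_{\ell\ne j}\prod_{m=1}^d\abs{D_n^+((t_j-t_\ell)_m)}\qquad\text{for every }j=1,\dots,M.
\end{equation*}
For each factor I would combine the trivial bound $\abs{D_n^+(t)}\le n+1$ with the classical pointwise estimate $\abs{D_n^+(t)}\le 1/(2\dist(t,\Z))$, valid whenever $\dist(t,\Z):=\min_{r\in\Z}\abs{t-r}>0$. The latter follows from $\abs{\sin(\pi(n+1)t)}\le 1$ together with the concavity inequality $\abs{\sin(\pi t)}\ge 2\dist(t,\Z)$.

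The main obstacle is verifying the displayed row-sum inequality under the hypothesis $n>\sqrt{d}/q$. The difficulty is that $\sep(\Omega)>q$ only guarantees $\max_m\dist((t_j-t_\ell)_m,\Z)>q$ for each pair $j\ne\ell$, whereas the product bound above requires control on \emph{every} coordinate. The plan is to organize the sum over $\ell\ne j$ into $\ell^\infty$-shells around $t_j$ in $\T^d$, use the $q$-separation of $\Omega$ to bound the number of points in the shell of radius $r$ by $\OO{(r/q)^d}$ via a standard packing argument, and then multiply by the per-shell Dirichlet bound. The $\sqrt d$-factor in the hypothesis enters precisely through the $\ell^\infty$-to-$\ell^2$ conversion $\norm{x}_\infty\ge\norm{x}_2/\sqrt d$ that is needed to transfer the hypothesized $\ell^\infty$-separation into the product-kernel denominator; this is also what makes the bound tight. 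Carrying out this multivariate packing estimate so that the resulting row-sum stays strictly below $(n+1)^d$ is the only non-trivial step of the argument.
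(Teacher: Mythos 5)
There is a genuine gap, and it sits exactly in the step you defer as ``the only non-trivial step'': the row-sum estimate cannot be pushed below $(n+1)^d$ under the hypothesis $n>\sqrt{d}/q$ with the Dirichlet kernel. The obstruction is twofold. First, for a pair $j\ne\ell$ the $\ell^\infty$-separation only controls one coordinate, and in the worst case the remaining $d-1$ coordinates of $t_j-t_\ell$ are exactly zero (a $q$-separated set may contain many points differing in a single coordinate), so the bound $\prod_{m}\abs{D_n^+((t_j-t_\ell)_m)}\le (n+1)^{d-1}/(2\dist)$ is essentially sharp and the factor $(n+1)^{d-1}$ is unavoidable. Second, the shell decomposition then gives $\OO{r^{d-1}}$ points at $\ell^\infty$-distance about $rq$, each contributing $(n+1)^{d-1}/(2rq)$, so the row sum is of order $(n+1)^{d-1}q^{-1}\sum_{r\le 1/(2q)}r^{d-2}\sim (n+1)^{d-1}q^{-d}$ for $d\ge2$ (and $(n+1)^{0}q^{-1}\log(1/q)$ for $d=1$). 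Diagonal dominance then requires $n\gtrsim q^{-d}$ (resp.\ $n\gtrsim q^{-1}\log(1/q)$), which is far weaker than the claimed $n>\sqrt{d}/q$; the $\ell^\infty$-to-$\ell^2$ conversion does not repair this, since the divergence comes from the non-summability of the Dirichlet kernel, not from a norm mismatch.

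This is precisely why the paper does not argue this way: the lemma is quoted from the fully discrete Ingham inequality of Potts and Tasche, which replaces the unweighted Gram matrix (equivalently, the choice $\hat\psi=\mathbf{1}_{\{0,\dots,n\}^d}$ leading to the Dirichlet kernel) by a lower bound $\norm{F_n^*c}_2^2\ge\sum_{k\in\Z^d}\hat\psi(k)\,\abs{\hat c(k)}^2=\sum_{j,\ell}c_j\overline{c_\ell}\,\psi(t_j-t_\ell)$ for a carefully chosen function $\psi$ with $0\le\hat\psi\le\mathbf{1}_{\{0,\dots,n\}^d}$ whose periodization is nonpositive (or rapidly decaying) off a small ball; the $\sqrt{d}$ arises from inscribing a ball of radius $n/2$ into the frequency cube. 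If you want a self-contained proof, you must carry out such a test-function construction (Ingham/Kah\-ane, or a tensor power of the Fej\'er kernel with a worse constant); Gershgorin applied directly to $F_nF_n^*$ cannot deliver the stated threshold.
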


\begin{remark}
 The semi-discrete Ingham inequality \cite[Ch.~8]{KoLo04} has been made fully discrete in \cite[Lem.~3.1]{PoTa132}.
 Equivalent results are given in \cite{Ba99,KuPo07} as condition number estimates for Vandermonde matrices.
 More recently, a sharp condition number estimate for the univariate case $d=1$ has been proven in \cite{Mo15} and a
 multivariate generalization under `coordinate wise separation' has been given in \cite{Li15}.
\end{remark}

\begin{thm}\label{thm:trigMain}
 Let $f\colon\Z^d\to\C$ be an $M$-sparse $d$-variate exponential sum with parameters $x_j\in\T^d$, $j=1,\dots,M$.
 If the parameters are $q$-separated and $n>d^{3/2}/q+d+1$, then
\begin{equation*}
 \Omega=\V(\ker T_n),
\end{equation*}
where the entries of the matrix are given by trigonometric moments of order up to $n$, i.e.,
\begin{equation*}
 T_n=(f(k-\ell))_{k,\ell\in\{0,\hdots,n\}^d}\in \C^{(n+1)^d\times (n+1)^d}.
\end{equation*}
\end{thm}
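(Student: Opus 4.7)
The plan is to reduce the Toeplitz statement $\Omega=\V(\ker T_n)$ to the Vandermonde statement of Lemma~\ref{lem:Moeller-maxdeg} by invoking Ingham's inequality (Lemma~\ref{lem:ingham}) twice: once at an intermediate index $m$ to trigger Lemma~\ref{lem:Moeller-maxdeg}, and once at $n$ to transfer the conclusion from $F_n$ to $T_n$ via a Vandermonde-type factorization.

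First I would pick a positive integer $m$ with $\sqrt{d}/q<m\le\sqrt{d}/q+1$; such an $m$ exists because any half-open interval of length one meets $\N$. Lemma~\ref{lem:ingham} then yields that $F_m$ has full row rank $M$, so Lemma~\ref{lem:Moeller-maxdeg} gives $\Omega=\V(\ker F_{dm+1})$. The hypothesis $n>d^{3/2}/q+d+1$ combined with $m\le\sqrt{d}/q+1$ forces $dm+1\le d^{3/2}/q+d+1<n$; zero-padding along the multi-index therefore embeds $\ker F_{dm+1}$ into $\ker F_n$, so that $\V(\ker F_n)\subset\V(\ker F_{dm+1})=\Omega$. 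The reverse inclusion $\Omega\subset\V(\ker F_n)$ is immediate from the definition of $F_n$, hence $\V(\ker F_n)=\Omega$.

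Next I would exploit the torus structure: expanding $\eip{(k-\ell)t_j}=\eip{kt_j}\,\eim{\ell t_j}$ yields the factorization $T_n=F_n^{\top}D\bar F_n$ with $D=\diag(\hat f_1,\dots,\hat f_M)$ invertible. Since $n>\sqrt{d}/q$, Lemma~\ref{lem:ingham} makes $F_n$ of full row rank $M$, so $F_n^{\top}$ has trivial kernel and a short chase yields $T_n c=0\Leftrightarrow \bar F_n c=0\Leftrightarrow \bar c\in\ker F_n$, i.e.~$\ker T_n=\overline{\ker F_n}$. Interpreting coefficient vectors as trigonometric polynomials $c\mapsto\sum_k c_k\eim{kt}$---the convention matching the Toeplitz layout of $T_n$---the common zero set of $\ker T_n$ on $\T^d$ coincides with the common zero set of $\ker F_n$ in the standard identification $c\mapsto\sum_k c_k z^k$, so by the first step $\V(\ker T_n)=\V(\ker F_n)=\Omega$.

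The main obstacle is the arithmetic bookkeeping on $m$: it must be a positive integer that simultaneously satisfies $m>\sqrt{d}/q$ (for Ingham to apply to $F_m$) and $dm+1\le n$ (for the embedding into $F_n$ at the multi-degree level). The stated bound $n>d^{3/2}/q+d+1$ is calibrated precisely so that such an $m$ exists, via the estimate $d(\sqrt{d}/q+1)+1=d^{3/2}/q+d+1$; any sharpening of this constant would require replacing Lemma~\ref{lem:Moeller-maxdeg} by a result avoiding the factor $d$ loss between $n$ and $dn+1$, which is ruled out by the remark following that lemma.
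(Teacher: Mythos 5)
Your proof is correct and takes essentially the same route as the paper's: pick an intermediate order exceeding $\sqrt{d}/q$ so that Lemma~\ref{lem:ingham} and Lemma~\ref{lem:Moeller-maxdeg} give $\Omega=\V(\ker F_n)$, then transfer to $T_n$ through the Vandermonde-type factorization (the paper sets $n_0=\lfloor (n-1)/d\rfloor$ and uses the Frobenius rank inequality to get $\rank T_n=M$ and hence $\ker T_n=\ker F_n$, whereas you chase kernels directly via injectivity of $F_n^{\top}$ and invertibility of $D$ --- both work). Your explicit handling of the zero-padding embedding $\ker F_{dm+1}\subset\ker F_n$ and of the conjugation conventions in the Toeplitz factorization fills in details the paper leaves implicit.
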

\begin{proof}
Setting $n_0:=\lfloor (n-1)/d\rfloor$ yields $n_0>\sqrt{d}/q$ and Lemma \ref{lem:ingham} implies full rank of $F_{n_0}$.
Thus $\Omega=\V(\ker F_n)$ is guaranteed by Lemma \ref{lem:Moeller-maxdeg} and the factorization
\begin{equation*}
T_n=F_n^* D F_n,\quad D=\diag(\hat{f}_1,\dots,\hat{f}_M),
\end{equation*}
being a variant of \eqref{eq:factorH}, together with the Frobenius' rank inequality~\cite[0.4.5~(e)]{HoJo13}
\begin{equation*}
M
=\rank F_n^* D+\rank D F_n-\rank D
\le\rank T_n
\le\rank F_n
=M,
\end{equation*}
implies $\ker F_n=\ker T_n$ from which the assertion follows.
\end{proof}

This improves over \cite[Thm.~3.1, 3.7]{KuPeRoOh16} by getting rid of the technical condition $n\ge M$ and thus
the number of used moments can be bounded from above by $(n+1)^d\le C_d M$ if the parameters are quasi-uniformly distributed.
Finally, note that the sum of squares representation \cite[Thm.~3.5]{KuPeRoOh16} implies that $n>d^{3/2}/q+d+1$ suffices that
the semidefinite program in \cite{CaFe14} indeed solves the total variation minimization problem for nonnegative measures in all dimensions $d$.
In particular, this gives a sharp constant in \cite[Thm.~1.2]{CaFe14} and bypasses the relaxation from a nonnegative trigonometric polynomial
to the sum of squares representation, known to possibly increase degrees for $d=2$ and to possibly fail for $d>2$, cf.~\cite[Remark~4.17, Theorem~4.24, and Remark~4.26]{Du07}.

\subsection{Spherical harmonics and parameters on the sphere}\label{subsect:sphere}
Now let $\K=\R$, restrict to parameters on the unit sphere $\S^{d-1}=\{x\in\R^d: x^\top x=1\}=\V(1-\sum_{j=1}^dX_j^2)$ in the $d$-dimensional Euclidean space, and we refer to \cite{Mue66,Sz75,AtHa12} for an introduction to approximation on the sphere and spherical harmonics.
The polynomials in $d$ variables of degree up to $n$ restricted to the sphere can be decomposed into mutually orthogonal spaces
\begin{equation*}
 \Pi_n/\I_n(\S^{d-1})=\bigoplus_{k=0}^n H_k^d
\end{equation*}
of real spherical harmonics of degree $k\in\NZ$ and we let $\{Y_k^\ell:\S^{d-1}\rightarrow\C:\ell=1,\hdots,\dim(H_k^d)\}$
denote an orthonormal basis for each $H_k^d$.
The dimension of these spaces obeys $N_k:=\dim(H_k^d)={\left(2k+d-2\right)\Gamma\left(k+d-2\right)}/({\Gamma\left(k+1\right)\Gamma\left(d-1\right)})$ for $k\ge 1$ and
we let $N:=\sum_{k=0}^n N_k=\mathcal{O}(n^{d-1})$ denote the dimension of $\Pi_n/\I_n(\S^{d-1})$.

Now let $M\in\N$, coefficients $\hat{f}_j\in\R\setminus\set{0}$, and pairwise distinct $x_j\in\S^{d-1}$, $j=1,\dots,M$, be given.
Then the moment sequence of the signed Dirac ensemble $\mu:\mathcal{P}(\S^{d-1})\rightarrow\R$, $\mu=\sum_{j=1}^M\hat{f}_j\delta_{x_j}$,
is the spherical harmonic sum
\begin{equation*}
f\colon\{(k,\ell):k\in\NZ, \ell=1,\hdots,N_k\}\to\R,
\quad
(k,\ell)\mapsto\int_{\S^{d-1}} Y_k^{\ell}(x) \dd\mu(x)
=\sum_{j=1}^M\hat{f}_j Y_k^{\ell}(x_j),
\end{equation*}
with parameters $x_j\in\S^{d-1}$.
Finally, we define the multivariate Vandermonde matrix a.k.a. nonequispaced spherical Fourier matrix
\begin{equation*}
 Y_n:=\left(Y_k^{\ell}(x_j)\right)_{\substack{j=1,\dots,M\\k\in\NZ, k\le n, \ell=1,\hdots,N_k}}\in\R^{M\times N}.
\end{equation*}

Regarding the reconstruction of the measure from its first moments, we have the following results.
\begin{lemma}\label{lem:VkerY}
Let $\Omega:=\set{x_j:j=1,\dots,M}\subset\S^{d-1}$ and $n\in\N_0$ such that $Y_n$ has full rank $M$, then
$\Omega=\V_{\S^{d-1}}(\ker Y_{n+1})$.
\end{lemma}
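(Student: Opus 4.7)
The plan is to deduce this from Corollary \ref{cor:Moeller} applied to $V=\S^{d-1}$. The essential point is to translate the matrix-theoretic statement ``$Y_n$ has full rank $M$'' into the module-theoretic statement ``$\calA_{V,n}^\Omega$ is surjective.''

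First I would identify the matrix $Y_n$ with a representation matrix of $\calA_{V,n}^\Omega\colon \Pi_{V,n}\to \R^\Omega$. By the orthogonal decomposition
\begin{equation*}
\Pi_{V,n}=\Pi_n/\I_n(\S^{d-1})=\bigoplus_{k=0}^n H_k^d,
\end{equation*}
the family $\{Y_k^\ell:k\le n,\ \ell=1,\dots,N_k\}$ is an $\R$-basis of $\Pi_{V,n}$. With respect to this basis on the domain and the canonical basis on $\R^\Omega$, the representation matrix of $\calA_{V,n}^\Omega$ (transposed to match conventions) is exactly $Y_n$. Consequently, $Y_n$ has full rank $M=|\Omega|$ if and only if $\calA_{V,n}^\Omega$ is surjective.

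Next I would invoke Corollary \ref{cor:Moeller}, which yields $\Omega=\V_{\S^{d-1}}(\I_{V,n+1}(\Omega))$. It then only remains to observe that $\I_{V,n+1}(\Omega)=\ker\calA_{V,n+1}^\Omega$ and $\ker Y_{n+1}$ describe the same subspace of $\Pi_{V,n+1}$: a coefficient vector lies in $\ker Y_{n+1}$ iff the associated linear combination of spherical harmonics of degree $\le n+1$ vanishes on all points of $\Omega$, which is precisely the condition for an element of $\I_{V,n+1}(\Omega)$. Therefore $\V_{\S^{d-1}}(\ker Y_{n+1})=\V_{\S^{d-1}}(\I_{V,n+1}(\Omega))=\Omega$.

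The main obstacle is almost purely bookkeeping: one must be careful that $Y_n$ is the matrix of the evaluation map on the quotient $\Pi_{V,n}=\Pi_n/\I_n(\S^{d-1})$ rather than on $\Pi_n$ itself, so that the right objects from Section~\ref{subsect:suff} are being applied. Once this identification is made, the argument is a direct transport of Corollary \ref{cor:Moeller} through the spherical-harmonic basis; no additional spherical-specific analysis is needed, in complete parallel to how Lemma \ref{lem:Moeller-maxdeg} was obtained from Theorem \ref{thm:Moeller} in the torus case.
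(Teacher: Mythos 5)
Your proposal is correct and follows exactly the paper's own argument: identify $Y_n$ as the representation matrix of $\calA_{\S^{d-1},n}^\Omega$ with respect to the spherical-harmonic basis of $\Pi_{\S^{d-1},n}$, deduce surjectivity from the rank assumption, and apply Corollary~\ref{cor:Moeller}. Your additional remark that $\ker Y_{n+1}$ and $\I_{\S^{d-1},n+1}(\Omega)$ describe the same subspace is exactly the identification the paper leaves implicit in calling the assertion an ``immediate consequence.''
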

\begin{proof}
Note that $Y_n\in\R^{M\times N}$ is the matrix of the $\R$-linear map $\calA_{\S^{d-1},n}^\Omega\colon\Pi_{\S^{d-1},n}\to\R^\Omega\cong\R^M$
w.r.t.~the basis $\bigcup_{k=0}^n\set{Y^\ell_k:\ell=1,\dots,\dim(H^d_k)}$ of $\Pi_{\S^{d-1},n}$ and the canonical basis of $\R^M$.
Since $\rank Y_n=M$ by assumption, $\calA_{\S^{d-1},n}^\Omega$ is surjective
and the assertion is an immediate consequence of Corollary~\ref{cor:Moeller}.
\end{proof}

\begin{lemma}[\protect{\cite[Thm.~2.4]{Ku07}}]\label{lem:inghamSd}
 For $\Omega:=\set{x_j:j=1,\dots,M}\subset\S^{d-1}$ let 
  \begin{equation*}
   \sep(\Omega):=\min_{j\ne\ell} \arccos\left(x_j^\top x_{\ell}\right)
  \end{equation*}
 denote the separation distance and call the set of parameters $q$-separated if $\sep(\Omega)>q$.
 Now if $n\in\NZ$ fulfills $n>2.5\pi d/q$, then the matrix $Y_n\in\C^{M\times N}$ has full rank $M$.
\end{lemma}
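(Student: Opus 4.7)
The plan is to deduce full column rank of $Y_n$ from strict positive definiteness of the Gram matrix $G_n:=Y_nY_n^\ast\in\R^{M\times M}$, and to establish the latter by Gershgorin's disc theorem applied to a reproducing-kernel reformulation of $G_n$. Note that $\rank Y_n=\rank G_n$, so it suffices to show that $G_n$ has no zero eigenvalue.

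First, by the addition theorem for spherical harmonics the entries of $G_n$ depend only on the inner product $x_j^\top x_\ell$:
\begin{equation*}
(G_n)_{j,\ell}=\sum_{k=0}^n\sum_{m=1}^{N_k}Y_k^m(x_j)\overline{Y_k^m(x_\ell)}=K_n(x_j^\top x_\ell),
\end{equation*}
where $K_n$ is (up to normalization by the surface measure) the Christoffel--Darboux kernel for the Gegenbauer polynomials $C_k^{(d-2)/2}$ summed up to degree $n$. In particular every diagonal entry equals $K_n(1)=\sum_{k=0}^n N_k=N$, which is of order $n^{d-1}$. Second, I would invoke quantitative Szeg\H{o}-type pointwise bounds on Gegenbauer polynomials together with Christoffel--Darboux to obtain a localization estimate of the form $|K_n(\cos\theta)|\le c_d\, n^{d-1}(n\theta)^{-d}$ for $\theta\gtrsim 1/n$, capturing the concentration of $K_n$ near the diagonal on a scale $\sim 1/n$.

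Third, for a fixed index $j$ I would bound the off-diagonal row sum $R_j:=\sum_{\ell\ne j}|K_n(x_j^\top x_\ell)|$ by a dyadic decomposition into spherical annuli $A_s:=\{x_\ell:2^{s-1}q<\arccos(x_j^\top x_\ell)\le 2^s q\}$. The $q$-separation hypothesis yields a packing estimate $|A_s|\lesssim (2^s)^{d-1}$, and summing the localization bound over $s\ge 0$ produces a geometric series dominated by a constant multiple of $n^{d-1}/(nq)^d$. Gershgorin's disc theorem applied to the Hermitian matrix $G_n$ then guarantees positive definiteness whenever $K_n(1)>R_j$ for every $j$, which after dividing through by $n^{d-1}$ reduces to an inequality of the form $nq>c_d\cdot d$.

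The main obstacle is pinning down the sharp constant $2.5\pi$. The structural argument above (addition theorem, Christoffel--Darboux localization, spherical packing, Gershgorin) is robust and yields a threshold of the correct qualitative form $n\gtrsim d/q$, but the explicit prefactor $2.5\pi$ requires delicate book-keeping through the Gegenbauer bounds and the spherical cap packing lemma — the factor $\pi$ naturally arising from the relation between chord length and geodesic distance on $\S^{d-1}$. Since this calibration is carried out carefully in \cite[Thm.~2.4]{Ku07}, I would simply cite that result rather than reproduce its quantitative analysis.
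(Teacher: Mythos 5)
The paper does not actually prove this lemma: it is quoted verbatim from \cite[Thm.~2.4]{Ku07} and used as a black box, so your final decision to defer the quantitative calibration to that reference is exactly what the authors do. Your structural outline (pass to the Gram matrix $Y_nY_n^\ast$, reduce its entries to a univariate kernel via the addition theorem, localize, pack, and apply Gershgorin) is also the right family of techniques and is essentially what the cited proof is built on.

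However, the sketch as written contains a genuine quantitative error that would make it collapse if carried out literally. The Christoffel--Darboux (Dirichlet-type) reproducing kernel $K_n$ of $\Pi_n/\I_n(\S^{d-1})$ does \emph{not} satisfy $|K_n(\cos\theta)|\le c_d\,n^{d-1}(n\theta)^{-d}$. By the Szeg\H{o} asymptotics for the Jacobi polynomial to which the Christoffel--Darboux sum telescopes, the true decay is only of order $n^{d-1}(n\theta)^{-(d+1)/2}$ (indeed essentially $(n\theta)^{-d/2}$ for the Gegenbauer partial sum itself). In your dyadic estimate the annulus $A_s$ contributes about $2^{s(d-1)}$ points by the packing bound, so the series $\sum_s 2^{s(d-1)}(n2^sq)^{-\gamma}$ converges only when $\gamma>d-1$; with $\gamma=(d+1)/2$ this already fails (at best logarithmically diverges) for the standard sphere $\S^2$ and genuinely diverges for $d\ge4$. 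This lack of localization of the raw kernel is precisely why the literature (Mhaskar--Narcowich--Ward, and \cite{Ku07} following them) replaces $K_n$ by a \emph{filtered} kernel $K_n^h(x^\top y)=\sum_{k}h(k/n)\sum_{m}Y_k^m(x)Y_k^m(y)$ with a smooth cutoff $h$, which decays like $(n\theta)^{-K}$ for any prescribed $K$. The associated matrix is $Y_nHY_n^\ast$ with $0\preceq H=\diag(h(k/n))\preceq I$, hence $Y_nHY_n^\ast\preceq Y_nY_n^\ast$, and Gershgorin applied to the filtered matrix yields positive definiteness of $Y_nY_n^\ast$ and thus $\rank Y_n=M$. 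With that substitution your argument closes; without it, the key summability step fails.
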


\begin{thm}\label{thm:sphMain}
 Let $f\colon \{(k,\ell):k\in\NZ, \ell=1,\hdots,N_k\}\to\R$ be an $M$-sparse spherical harmonic sum with parameters $x_j\in\S^{d-1}$, $j=1,\dots,M$.
 If the parameters are $q$-separated and $n>2.5\pi d/q+1$, then
 \begin{equation*}
  \Omega=\V_{\S^{d-1}}(\ker \tilde H_n)
 \end{equation*}
 where the entries of the matrix
 \begin{equation*}
  \tilde H_n:=Y_n^{\top} D Y_n\in\R^{N\times N},\qquad D=\diag(\hat f_1,\hdots,\hat f_M),
 \end{equation*}
 mimicking \eqref{eq:factorH}, can be computed solely from the moments $f(k,\ell)$, $k\le 2n$, $\ell=1,\hdots,N_k$.
\end{thm}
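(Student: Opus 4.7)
The strategy mirrors that of Theorem~\ref{thm:trigMain}, with Lemma~\ref{lem:inghamSd} and Lemma~\ref{lem:VkerY} replacing their torus counterparts and Corollary~\ref{cor:Moeller} doing the work of Theorem~\ref{thm:Moeller} because we now live on the algebraic variety $\S^{d-1}$.

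First I would verify the parenthetical claim that the entries of $\tilde H_n$ depend only on moments of order up to $2n$. By definition, the $((k,\ell),(k',\ell'))$-entry of $Y_n^\top D Y_n$ equals $\sum_{j=1}^M \hat f_j Y_k^\ell(x_j)Y_{k'}^{\ell'}(x_j)$, i.e.\ the integral of $Y_k^\ell\cdot Y_{k'}^{\ell'}$ against $\mu$. Since $Y_k^\ell\cdot Y_{k'}^{\ell'}$ is a polynomial of degree $k+k'\le 2n$, its restriction to $\S^{d-1}$ decomposes into the orthogonal pieces $H_0^d\oplus\dots\oplus H_{k+k'}^d$, so the entry is a fixed (computable) linear combination of the moments $f(m,r)$ with $m\le 2n$. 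This part is routine bookkeeping using standard product (Clebsch--Gordan type) expansions of spherical harmonics.

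Second, set $n_0:=n-1$. The hypothesis $n>2.5\pi d/q+1$ gives $n_0>2.5\pi d/q$, so Lemma~\ref{lem:inghamSd} asserts that $Y_{n_0}\in\R^{M\times N_{n_0}}$ has full rank $M$. Then Lemma~\ref{lem:VkerY} applied with $n_0$ yields
\begin{equation*}
  \Omega=\V_{\S^{d-1}}(\ker Y_{n_0+1})=\V_{\S^{d-1}}(\ker Y_n).
\end{equation*}

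Third, I would show $\ker \tilde H_n=\ker Y_n$. The inclusion ``$\supset$'' is immediate from $\tilde H_n=Y_n^\top D Y_n$. For the converse, note that $\rank Y_n\ge\rank Y_{n_0}=M$, hence $\rank Y_n=M$ (the number of rows). Invertibility of $D$ gives $\rank(DY_n)=M$, and Frobenius' rank inequality (as in the proof of Theorem~\ref{thm:trigMain}) yields
\begin{equation*}
 M=\rank Y_n^\top D+\rank DY_n-\rank D\le \rank \tilde H_n\le \rank Y_n=M.
\end{equation*}
Since $Y_n^\top\colon\R^M\to\R^N$ is injective (it has rank $M$) and $D$ is invertible, $Y_n^\top D Y_n v=0$ forces $Y_n v=0$. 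Combining this with the previous step gives $\V_{\S^{d-1}}(\ker \tilde H_n)=\V_{\S^{d-1}}(\ker Y_n)=\Omega$, which is the claim.

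The only non-cosmetic point, and the step I would expect a reader to want the most care on, is the first one: making precise that products of spherical harmonics of degrees $\le n$ produce, after restriction to $\S^{d-1}$, elements of $\bigoplus_{m=0}^{2n}H_m^d$ so that the $\tilde H_n$ entries are genuinely accessible from the given data. Everything else is a direct transcription of the torus argument, with Corollary~\ref{cor:Moeller} (sitting behind Lemma~\ref{lem:VkerY}) absorbing the fact that we work modulo the vanishing ideal of $\S^{d-1}$.
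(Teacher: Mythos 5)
Your proposal is correct and follows essentially the same route as the paper: Lemma~\ref{lem:inghamSd} gives full rank of $Y_{n-1}$, Lemma~\ref{lem:VkerY} then yields $\Omega=\V_{\S^{d-1}}(\ker Y_n)$, the Frobenius rank inequality (or, as you also observe, the direct injectivity of $Y_n^{\top}D$ on the image of $Y_n$) gives $\ker\tilde H_n=\ker Y_n$, and the Clebsch--Gordan expansion of products of spherical harmonics shows the entries of $\tilde H_n$ are computable from the moments of order at most $2n$. No gaps.
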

\begin{proof}
 We just combine Lemmata \ref{lem:VkerY}, \ref{lem:inghamSd}, and proceed as in Theorem \ref{thm:trigMain} to show $\ker\tilde H_n=\ker Y_n$.
 Finally note that $Y_k^{\ell}\cdot Y_r^s =\sum_{t=0}^{k+s}\sum_{u=1}^{N_t} c_{k,r,t}^{\ell,s,u} Y_t^u$ with some
 Clebsch-Gordan coefficients and thus
 \begin{equation*}
  (\tilde H_n)_{(k,\ell),(r,s)}=\sum_{j=1}^M \hat f_j Y_k^{\ell}(x_j) Y_r^s(x_j)=\sum_{t=0}^{k+s}\sum_{u=1}^{N_t} c_{k,r,t}^{\ell,s,u} f(t,u).
 \end{equation*}
\end{proof}

 Finally note that the semidefinite program in \cite{BeDeFe15a,BeDeFe15b} indeed solves the total variation minimization problem for nonnegative measures on spheres in all dimensions $d$ provided the order of the moments is large enough as shown by the following construction of a dual certificate and sum of squares representation.
\begin{corollary}\label{cor:DualCertificate}
 Let $d,n,M\in\N$, $\Omega=\{x_j\in\S^{d-1}:j=1,\hdots,M\}$ be $q$-separated, and $n>2.5\pi d/q+1$.
 Moreover, let $\hat p_r\in\R^N$, $r=1,\hdots,N$, be an orthonormal basis with $\hat p_r\in\ker(Y_n)^{\bot}$, $r=1,\hdots,M$,
 and $p_r:\S^{d-1}\rightarrow\R$, $p_r=\sum_{k=0}^n \sum_{\ell=1}^{N_k} \hat p_{r,k}^{\ell} Y_k^\ell$, then $p:\S^{d-1}\rightarrow\R$,
 \begin{equation*}
  p(x)=\frac{2\pi^{d/2}}{\Gamma(d/2)N} \sum_{r=1}^M |p_r(x)|^2,
 \end{equation*}
 is a polynomial on the sphere of degree at most $2n$ and fulfills $0\le p(x) \le 1$ for all $x\in \S^d$ and $p(x)=1$ if and only if $x\in\Omega$.
\end{corollary}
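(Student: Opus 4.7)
The plan is to identify $p$ with a rescaled squared norm of the orthogonal projection of the spherical evaluation vector $y(x):=(Y_k^\ell(x))_{k\le n,\,\ell=1,\dots,N_k}\in\R^N$ onto the row space of $Y_n$, to read off the correct normalization constant from the addition theorem for spherical harmonics, and then to characterize the equality set by combining Lemmata~\ref{lem:inghamSd} and~\ref{lem:VkerY} applied at degree $n-1$. The degree bound and nonnegativity are immediate: each $p_r$ lies in $\Pi_{\S^{d-1},n}$, so $|p_r|^2$ has degree at most $2n$ on $\S^{d-1}$, and $p\ge 0$ follows from the sum-of-squares form.

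For the upper bound $p\le 1$, I would introduce the orthogonal projection $P:=\sum_{r=1}^M \hat p_r\hat p_r^\top\in\R^{N\times N}$. By hypothesis the $\hat p_1,\dots,\hat p_M$ form an orthonormal basis of the $M$-dimensional subspace $(\ker Y_n)^\perp$ (note that $\rank Y_n=M$ since already $\rank Y_{n-1}=M$ by Lemma~\ref{lem:inghamSd} below), so $P$ really is the projector onto that row space. Writing $p_r(x)=\hat p_r^\top y(x)$, one obtains
\begin{equation*}
\sum_{r=1}^M |p_r(x)|^2 = y(x)^\top P\,y(x) = \|P\,y(x)\|^2\le\|y(x)\|^2.
\end{equation*}
The addition theorem gives $\sum_{\ell=1}^{N_k}|Y_k^\ell(x)|^2=N_k/|\S^{d-1}|$, whence summing over $k=0,\dots,n$ and using $|\S^{d-1}|=2\pi^{d/2}/\Gamma(d/2)$ yields $\|y(x)\|^2=N\Gamma(d/2)/(2\pi^{d/2})$, exactly cancelling the prefactor and proving $p(x)\le 1$.

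For the characterization $p(x)=1\Leftrightarrow x\in\Omega$, equality in $\|P y(x)\|\le\|y(x)\|$ holds iff $y(x)\in(\ker Y_n)^\perp$, i.e.~iff $\langle y(x),v\rangle=0$ for every $v\in\ker Y_n$. Under the identification of $v\in\ker Y_n$ with the coefficient vector of a polynomial $q\in\I_{\S^{d-1},n}(\Omega)$ in the basis $\{Y_k^\ell\}$, this reads $q(x)=0$ for all such $q$, i.e.~$x\in\V_{\S^{d-1}}(\ker Y_n)$. Since the hypothesis $n>2.5\pi d/q+1$ gives $n-1>2.5\pi d/q$, Lemma~\ref{lem:inghamSd} guarantees $\rank Y_{n-1}=M$, and Lemma~\ref{lem:VkerY} applied with $n$ replaced by $n-1$ then delivers $\Omega=\V_{\S^{d-1}}(\ker Y_n)$, closing the argument.

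The main obstacle I expect is the constant-bookkeeping: verifying that $\sum_{k=0}^n N_k=N$ together with the $|\S^{d-1}|^{-1}$ factor from the addition theorem matches the prefactor $2\pi^{d/2}/(\Gamma(d/2)N)$, and checking that the identification of $v\in\ker Y_n$ with $q\in\I_{\S^{d-1},n}(\Omega)$ is compatible with the $L^2(\S^{d-1})$-orthonormality of the chosen basis so that the pointwise identity $\langle y(x),v\rangle=q(x)$ holds as needed.
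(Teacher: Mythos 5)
Your proof is correct and essentially the same as the paper's: both rest on the addition theorem to evaluate the normalizing constant and on Lemmata~\ref{lem:inghamSd} and~\ref{lem:VkerY} (which is exactly how Theorem~\ref{thm:sphMain}, cited by the paper, is proved) to identify the equality set with $\V_{\S^{d-1}}(\ker Y_n)=\Omega$. The only cosmetic difference is packaging: the paper completes $\hat p_1,\dots,\hat p_M$ to a full orthonormal basis and observes $\sum_{r=1}^N|p_r(x)|^2$ is constant, so that $1-p(x)$ is the tail $\sum_{r=M+1}^N|p_r(x)|^2$, which is precisely your $\|(I-P)y(x)\|^2$.
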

\begin{proof}
 First note that every orthonormal basis $\hat p_{\ell}\in\R^N$, $\ell=1,\hdots,N$, leads to
 \begin{equation*}
  \sum_{r=1}^N |p_r(x)|^2
  = \sum_{k,u=0}^n\sum_{\ell,v=1}^{N_k} Y_k^{\ell}(x) Y_u^v(x) \sum_{r=1}^N  \hat p_{r,k}^{\ell} \hat p_{r,u}^{v}
  = \sum_{k=0}^n\sum_{\ell=1}^{N_k} Y_k^{\ell}(x) Y_k^{\ell}(x)=\frac{\Gamma(d/2)N}{2\pi^{d/2}}
 \end{equation*}
 for $x\in\S^{d-1}$, where the last equality is due to the addition theorem for spherical harmonics and as in the proof of Theorem~\ref{thm:sphMain}, the product
 $Y_k^{\ell} \cdot Y_k^{\ell}$ always is a polynomial on the sphere of degree at most $2k$.
 Finally, Theorem~\ref{thm:sphMain} assures $\sum_{r=M+1}^N |p_r(x)|^2=0$ if and only if $x\in\Omega$.
\end{proof}


 \begin{example}
  We conduct the following two small scale numerical examples. For $M=3$ points on the unit sphere and a polynomial degree $n=2$, we compute the $N-M=6$ dimensional kernel of the nonequispaced spherical Fourier matrix $Y_n$, set up the corresponding kernel polynomials $p_r$, $r=4,\hdots,9$, as defined in Corollary \ref{cor:DualCertificate} and plot the surface
  $q(x)=1+\frac{1}{2}\min_{r=M+1,\hdots,N} |p_r(x)|^{1/4}$, $x\in\S^2$, in Figure \ref{fig:d=1}(a). The absolute value of each kernel polynomial forms a valley around its zero set which gets narrowed by the $4$-th root and the minimum over all these valleys visualizes the common zeros as junction points in this surface.
  
  In a second experiment, we consider $M=50$ random points $x_j$ on the unit sphere, an associated Dirac ensemble with random coefficients $\hat f_j$, and its moments up to order $60$, i.e., $n=30$.
  The $M$-dimensional orthogonal complement of the kernel of the matrix $\tilde H_n$ defines the so-called signal space.
  Figure \ref{fig:d=1}(b) clearly shows that the dual certificate $p$, defined as in Corollary \ref{cor:DualCertificate}, peaks exactly at the points $x_j$.
 \begin{figure}[htbp]
\centering
  \subfigure[Visualization of the polynomials in the kernel of $Y_n$, $M=3$ points, $n=2$, plot of the surface $q(x)=1+\frac{1}{2}\min_{r=M+1,\hdots,N} |p_r(x)|^{1/4}$, $x\in\S^2$.]
  {\includegraphics[width=0.49\textwidth]{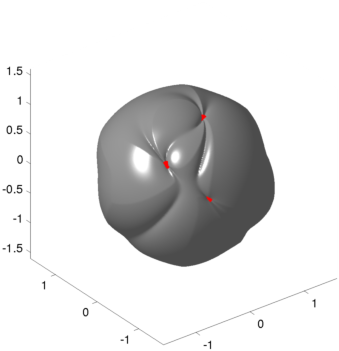}}
  \subfigure[$M=50$ random points on $\S^2$ are identified from the moments of order $\le 60$. The dual certificate $p$ is plotted as surface $1+\frac{1}{2}p(x)$, $x\in\S^2$.]
  {\includegraphics[width=0.49\textwidth]{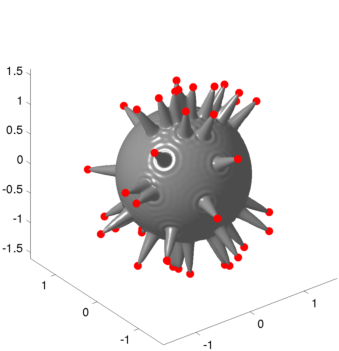}}
  \caption{Visualization of kernel polynomials and dual certificate on the sphere $\S^2$.}
\label{fig:d=1}
\end{figure}
\end{example}

\section{Summary}\label{sect:sum}

We considered a recently developed multivariate generalization of Prony's method, characterized its succeeding in terms of an interpolation condition, and gave a generalization to the sphere.
The interpolation condition is shown to hold for separated points in the trigonometric and the spherical case in arbitrary dimensions and also yield a certificate for popular semidefinite relaxations of the reconstruction problems.
Beyond the scope of this paper, future research needs to address the actual computation of the points and the stability under noise.

\textbf{Acknowledgment.}
 We gratefully acknowledge support
 by the DFG within the research training group 1916: Combinatorial structures in geometry
 and by the Helmholtz Association within the young investigator group VH-NG-526: Fast algorithms for biomedical imaging.

\bibliographystyle{abbrv}
\bibliography{../references}
\end{document}